\providecommand{\U}[1]{\protect\rule{.1in}{.1in}}
\newtheorem{theorem}{Theorem}[section]
\theoremstyle{plain}
\newtheorem{corollary}[theorem]{Corollary}
\newtheorem{definition}[theorem]{Definition}
\newtheorem{proposition}[theorem]{Proposition}
\numberwithin{equation}{section}
\begin{document}
\title[Model Spaces and Pick Spaces]{Characterizing model spaces among the finite dimensional RKHS with Pick kernels}
\author{Richard Rochberg}
\keywords{Pick property; model space,, conjugation operator, orthogonal matrix,
multiplier algebra}
\subjclass{46E22}
\maketitle

\begin{abstract}
We give several characterizations of those finite dimensional HSRK with
complete Pick kernels which are model spaces. One characterization involves
the size of the solution to a multiplier problem. Another involves having a
conjugation operator which is compatible with the RKHS structure.

\end{abstract}
\date{}

\section{Introduction and Summary}

We begin with an informal overview, detailed definitions and statements are in
the later sections.

Let $\mathcal{F}$ be the collection for finite dimensional reproducing kernel
Hilbert spaces (RKHS) with irreducible complete Pick kernels. Those are
exactly the spaces which are rescalings of spaces generated by finite sets of
Dirichlet-Arveson reproducing kernels; the exact statement is Definition
\ref{pick} below.

We interested in characterizing those $H\in\mathcal{F}$ that arise from finite
dimensional simple model spaces. That is, let $H^{2}$ be the the classical
Hardy space and $B\in H^{2}$ a finite Blaschke product with simple zeros at
the points $X=\left\{  x_{i}\right\}  _{i=1}^{n}$ in the complex unit disk
$\mathbb{B}^{1}.$ Let $K_{B}$ be the finite dimensional Hilbert space%

\[
K_{B}=H^{2}\ominus BH^{2}.
\]
We regard $K_{B}$ as an RKHS by declaring the functionals of evaluation at
points of $X$ to be the reproducing kernels; that is, the kernels are the
$k_{i}\in K_{B},$ $i=1,...,n,$ which satisfy $\left\langle f,k_{i}%
\right\rangle =f(x_{i})$ for all $f\in K_{B}.$ We call those RKHS model spaces
and call their rescalings $r-$model spaces. We denote the collection of all
$r-$model spaces by $\mathcal{M}$.

The $r-$model spaces are among the most easily described and thoroughly
studied elements of $\mathcal{F}$ \cite{GMR} and so it is interesting to know
how typical they are as elements of $\mathcal{F}$ and what distinguishes them
from the general elements of $\mathcal{F}$. Here we develop several criteria
that characterize the $H\in\mathcal{F}$ that are in $\mathcal{M}$, including
criteria based on the form of the associated set in complex hyperbolic space,
on the values of certain extremal multipliers, on the existence of a
conjugation operator taking the reproducing kernels to their dual basis, and
on having a Gram matrix that is an orthogonal matrix.

The next section has preliminary notation, definitions and results. In Section
3 we state our main results, the proofs are in Section 4. A final section
contains some comments.

\section{Background, Definitions, Notation}

\subsection{RKHS}

For general background about spaces in $\mathcal{F}$ we refer to \cite{AM} and
\cite{ARSW2}.

A finite dimensional RKHS is a finite dimensional Hilbert space $H$ together
with a designated basis $\mathfrak{K}=\mathfrak{K}(H)=\left\{  k_{i}\right\}
_{i=1}^{n}\subset H$ of vectors called reproducing kernels. We denote the dual
basis by $\mathfrak{K}^{\#}=\{k_{i}^{\#}\}_{i=1}^{n};$ that is, $k_{j}^{\#}\in
H$ and $\left\langle k_{i},k_{j}^{\#}\right\rangle =\delta_{ij}$ $1\leq
i,j\leq n.$ Notice that the Hilbert space $H$ with vectors of $\mathfrak{K}%
^{\#}$ as its set of reproducing kernels is also a RKHS, we denote it by
$H^{\#}.$ Let $\mathbf{K}=\mathbf{K}(H)$ be the Gram matrix of $\mathfrak{K,}$
the $n\times n$ matrix $\left(  k_{ij}\right)  $ with $k_{ij}=\left\langle
k_{i},k_{j}\right\rangle $ $1\leq i,j\leq n.$ Similarly let $\mathbf{K}^{\#}$
be the Gram matrix of $\mathfrak{K}^{\#}\mathfrak{.}$ It is not hard to check
that $\mathbf{K}^{\#}=\mathbf{K}^{-1}.$

We will use the metric $\delta=\delta_{H}$ defined on $\mathfrak{K}$ or,
equivalently, the index set of $\mathfrak{K.}$ For each $i$ let $P_{i}$ be the
orthogonal projection onto the span of the kernel function $k_{i}$ and let
$\left\Vert \cdot\right\Vert $ denote the operator norm. We set $\delta
(k_{i},k_{j})=\delta(i,j)=\left\Vert P_{i}-P_{j}\right\Vert .$\ There is also
a formula for $\delta$ in terms of Gram matrix entries;
\begin{equation}
\delta(k_{i},k_{j})=\sqrt{1-\frac{\left\vert k_{ij}\right\vert ^{2}}%
{k_{ii}k_{jj}}}. \label{distance}%
\end{equation}
More information about $\delta$ is in \cite{ARSW1} and \cite{R1}. {}

Two RKHS, $H$ and $\tilde{H},$ with reproducing kernels $\mathfrak{K}=\left\{
k_{i}\right\}  _{i=1}^{n}$ and $\mathfrak{\tilde{K}}=\{\tilde{k}_{i}%
\}_{i=1}^{n}$ are said to be rescalings of each other if there are nonzero
scalars $\left\{  \lambda_{i}\right\}  _{i=1}^{n}$ so that for $1\leq i,j\leq
n$ $\left\langle k_{i},k_{j}\right\rangle =\lambda_{i}\overline{\lambda_{j}%
}\left\langle \tilde{k}_{i},\tilde{k}_{j}\right\rangle .$ We denote this
equivalence relation by $H\sim$ $\tilde{H}$. Equivalently the Gram matrices of
the two spaces are related through conjugation by a diagonal matrix with
nonzero entries in which case we say that that the two matrices are rescalings
of each other. Most of the conditions we consider interact well with this
equivalence relation; for instance if $H\sim$ $\tilde{H}$ then $\delta
_{H}(k_{i},k_{j})=\delta_{\tilde{H}}(\tilde{k}_{i},\tilde{k}_{j})$ and
$H^{\#}\sim$ $(\tilde{H})^{\#}$

\subsection{\textbf{Model Spaces }}

The main facts we use about model spaces are in \cite{GP}; a general reference
is \cite{GMR}.

Suppose $B$ is a Blaschke product with simple zeros at the points $\left\{
z_{i}\right\}  _{i=1}^{n}\subset\mathbb{B}^{1}.$ Thus $B=\Pi_{i}B_{i}$ where,
for $x_{i}\neq0$
\begin{equation}
B_{i}(z)=\frac{\left\vert x_{i}\right\vert }{x_{i}}\frac{x_{i}-z}%
{1-\overline{x_{i}}z} \label{factor}%
\end{equation}
and $B_{i}(z)=z$ if $x_{i}=0.$ The reproducing kernels for the space $K_{B}$
are the functions $\left\{  k_{i}\right\}  \subset K_{B}$ defined by%

\[
k_{i}(w)=\frac{1}{1-\left\langle w,x_{i}\right\rangle }.
\]
To see this recall from Hardy space theory that for each $i$ and each $f\in
H^{2}$ we have $\left\langle f,k_{i}\right\rangle =f(x_{i}).$ With this in
hand it follows that $k_{i}\in\left(  BH^{2}\right)  ^{\perp}=K_{B}.$ These
two facts together insure that the $k_{i}$ are the reproducing kernels. It
then follows with a bit of computation that%
\begin{equation}
\delta(k_{x_{i}},k_{x_{j}})=\delta(x_{i},x_{j})=\left\vert \frac{x_{i}-x_{j}%
}{1-\overline{x_{i}}x_{j}}\right\vert =\left\vert B_{i}(x_{j})\right\vert
=\left\vert B_{j}(x_{i})\right\vert . \label{disk psh}%
\end{equation}

A conjugation operator on a Hilbert space $H$ is an isometric conjugate linear
map $J$ of $H$ to itself that is an involutive automorphism; that is
$\left\Vert Jh\right\Vert =\left\Vert h\right\Vert $ for all $h\in H$ and
$J^{2}=I.$ Each $K_{B}$ carries a conjugation operator $J=J_{B}$ given by%
\begin{equation}
J_{B}f=B\overline{zf}. \label{j}%
\end{equation}
In (\ref{j}) functions in $H^{2}$ are identified with their boundary values
and (\ref{j}) is an equation involving functions on $\mathbb{T}^{1},$ the
boundary circle of $\mathbb{B}^{1}.$ Using (\ref{j}) and the fact that each of
the factors $B_{i}$ of (\ref{factor}) is unimodular on $\mathbb{T}^{1}$ it is
not hard to check that the action of $J_{B}$ is on the kernel functions is
given by%
\[
Jk_{i}(x_{j})=\left\{
\begin{array}
[c]{lll}%
B(x_{j})/(x_{j}-x_{i})\text{ } & \text{if } & i\neq j\\
B^{\prime}(x_{i}) & \text{if } & i=j
\end{array}
\right.  ,
\]
\cite{GP}, \cite{GMR}. Hence for $1\leq i,j\leq n$
\begin{equation}
\left\langle k_{i},Jk_{j}\right\rangle =B^{\prime}(x_{i})\delta_{i,j}.
\label{almost conjugation}%
\end{equation}
Because $B$ has only simple zeros none of the $B^{\prime}(x_{i})$ are zero$.$
Thus it is almost true that $J$ maps the basis of reproducing kernels of
$K_{B}$ to its dual basis. By rescaling we can make that exactly true. We
describe the general pattern.

\begin{proposition}
\label{exact conjugation}Suppose $H$ is a RKHS with kernel functions $\left\{
k_{i}\right\}  _{i=1}^{n}.$ Suppose $J$ is a conjugation operator on $H$ and
that for nonzero $\left\{  c_{i}\right\}  $ we have
\[
\left\langle k_{i},Jk_{j}\right\rangle =c_{i}\delta_{ij}\text{ \ \ }1\leq
i,j\leq n.
\]
Let $\tilde{H}$ be the RKHS obtained by using the same Hilbert space as $H$
and the reproducing kernels $\{\tilde{k}_{i}\}$ defined by $\tilde{k}%
_{i}=c_{i}^{-1/2}k_{i},$ $1\leq i\leq n.$ Then $\tilde{H}$ $\sim$ $H$ and $J$
is a conjugation operator on $\tilde{H}$ which maps the reproducing kernels of
$\tilde{H}$ to their dual basis; that is
\[
\left\langle \tilde{k}_{i},J\tilde{k}_{j}\right\rangle =\delta_{ij}\text{
\ \ }1\leq i,j\leq n.
\]

\begin{proof}
The first two statements are clear. To check the third we compute, for $1\leq
i,j\leq n$%
\begin{align*}
\left\langle \tilde{k}_{i},J\tilde{k}_{j}\right\rangle  &  =\left\langle
c_{i}^{-1/2}k_{i},Jc_{j}^{-1/2}k_{j}\right\rangle =\left\langle c_{i}%
^{-1/2}k_{i},\overline{c_{j\text{ }}}^{-1/2}Jk_{j}\right\rangle \\
&  =c_{i}^{-1/2}c_{j}^{-1/2}\left\langle k_{i},Jk_{j}\right\rangle
=c_{i}^{-1/2}c_{j}^{-1/2}c_{i}\delta_{ij}=\delta_{ij}.
\end{align*}

\end{proof}
\end{proposition}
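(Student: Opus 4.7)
The proof breaks naturally into the three assertions, and none of them should be deep; the whole statement is essentially a bookkeeping exercise in how the rescaling factors interact with the conjugate-linearity of $J$.

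My plan is to handle the first two assertions essentially without computation. For $\tilde H \sim H$, I would just note that since $\tilde k_i = c_i^{-1/2} k_i$, the Gram matrix of $\{\tilde k_i\}$ is obtained from $\mathbf{K}(H)$ by conjugation with the diagonal matrix $\mathrm{diag}(c_i^{-1/2})$, which is the definition of rescaling. For the claim that $J$ is still a conjugation on $\tilde H$, I would point out that $\tilde H$ and $H$ share the same underlying Hilbert space: the rescaling only changes which basis is designated as the reproducing kernels, not the inner product. Since the three defining properties of a conjugation (isometry, conjugate linearity, involution) refer only to the Hilbert space structure, $J$ carries over unchanged.

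For the third assertion I would do the one-line calculation. Starting from $\langle \tilde k_i, J \tilde k_j\rangle$ and pulling out the scalar from the first argument gives $c_i^{-1/2}$, while pulling the scalar $c_j^{-1/2}$ out from under $J$ produces $\overline{c_j^{-1/2}}$ by conjugate linearity; this then exits the inner product's second argument with a further conjugation, yielding $c_j^{-1/2}$. Combining with the hypothesis $\langle k_i, J k_j\rangle = c_i\delta_{ij}$ gives $c_i^{-1/2} c_j^{-1/2} c_i \delta_{ij}$, and the Kronecker delta collapses this to $\delta_{ij}$.

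The only place where one has to be alert is the double conjugation in that last step: $J$ is conjugate linear and the inner product is conjugate linear in its second slot, so the conjugations cancel and one is left with $c_j^{-1/2}$ rather than $\overline{c_j^{-1/2}}$. This is what makes the specific choice of branch $c_j^{-1/2}$ (as opposed to, say, $|c_j|^{-1/2}$) work out uniformly — had the two conjugations not cancelled, one would have needed a modulus instead. So I would flag that as the only subtlety, and otherwise the proof is a direct verification.
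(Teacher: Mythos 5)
Your proof is correct and follows essentially the same route as the paper: the first two assertions are dispatched as immediate from the definitions, and the third is the same one-line computation in which the conjugate-linearity of $J$ and the conjugate-linearity of the inner product in its second slot combine to produce the factor $c_j^{-1/2}$ rather than $\overline{c_j^{-1/2}}$. Your explicit remark about why the two conjugations must cancel is a helpful gloss on the step the paper performs silently.
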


We will call such an $\tilde{H}$ orthogonal and say its rescaling $H$ is
$r-$orthogonal. The name is based on the following simple proposition. Recall
that a matrix is called orthogonal if its inverse equals its transpose.

\begin{proposition}
[{\cite[Prop. 8]{R1}}]\label{conjugation = orthogonal}$H,$ a finite
dimensional RKHS, is orthogonal if and only if its Gram matrix is an
orthogonal matrix.
\end{proposition}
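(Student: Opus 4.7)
The plan is to translate the ``orthogonal'' condition on $H$ into a matrix identity that will match the definition of an orthogonal matrix. Since $J$ is required to satisfy $\langle k_i, Jk_j\rangle = \delta_{ij}$, which is exactly the defining property of the dual basis, the condition is equivalent to $J k_j = k_j^{\#}$ for each $j$. Once this is recognized, the argument is essentially bookkeeping with antilinear isometries.

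For the forward direction, I would assume $H$ is orthogonal and use the isometric antilinear property of $J$ to compute
\[
(\mathbf{K}^{\#})_{ij}=\langle k_i^{\#},k_j^{\#}\rangle = \langle Jk_i, Jk_j\rangle = \overline{\langle k_i,k_j\rangle} = \overline{k_{ij}}.
\]
Thus $\mathbf{K}^{\#}=\overline{\mathbf{K}}$. Combining this with $\mathbf{K}^{\#}=\mathbf{K}^{-1}$ and the Hermitian identity $\overline{\mathbf{K}}=\mathbf{K}^T$ gives $\mathbf{K}^{-1}=\mathbf{K}^T$, so $\mathbf{K}$ is orthogonal.

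For the reverse direction, assume $\mathbf{K}$ is orthogonal. Define $J$ on the basis by $Jk_i=k_i^{\#}$ and extend antilinearly; the task is to verify that $J$ is isometric and satisfies $J^2=I$. Isometry reduces to checking $\langle Jk_i, Jk_j\rangle = \overline{k_{ij}}$ on basis elements, which follows immediately from $(\mathbf{K}^{\#})_{ij}=(\mathbf{K}^{-1})_{ij}=(\mathbf{K}^T)_{ij}=\overline{k_{ij}}$ (the last equality using that $\mathbf{K}$ is Hermitian). For the involution property, I would expand $k_i^{\#}=\sum_m a_{mi}k_m$, solve $\mathbf{K}\overline{A}=I$ to get $a_{mi}=k_{mi}$ under the orthogonality hypothesis, and then compute
\[
J^2 k_i = J\sum_m k_{mi}k_m = \sum_m \overline{k_{mi}}\,k_m^{\#} = \sum_{l,m} \overline{k_{mi}}\,k_{lm}\,k_l = \sum_l (\mathbf{K}\mathbf{K}^{-1})_{li}\,k_l = k_i.
\]

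The only real obstacle is the careful tracking of complex conjugates and index transposes as one passes between the Hermitian structure of the Gram matrix, the definition of orthogonal (real transpose, not conjugate transpose), and the antilinearity of $J$. None of these steps is hard in isolation, but the argument has no real content beyond getting these conventions aligned.
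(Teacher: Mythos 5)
Your proof is correct, and it follows essentially the same route the paper indicates: the paper defers the actual proof to \cite[Prop.~8]{R1}, but the discussion immediately following the proposition (that $\mathbf{K}^{t}=\overline{\mathbf{K}}$ and $\mathbf{K}^{\#}=\mathbf{K}^{-1}$ always hold, so orthogonality of $H$ amounts to $\mathbf{K}\overline{\mathbf{K}}=I$) is exactly the skeleton you flesh out, via the observation that $\left\langle k_{i},Jk_{j}\right\rangle =\delta_{ij}$ forces $Jk_{j}=k_{j}^{\#}$ and the antiunitarity identity $\left\langle Ju,Jv\right\rangle =\overline{\left\langle u,v\right\rangle }$. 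The conjugate-and-transpose bookkeeping in both directions, including the verification that $J^{2}=I$, checks out.
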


Suppose $H$ is a finite dimensional RKHS with kernel functions $\left\{
k_{i}\right\}  $ and Gram matrix $\mathbf{K}=\left(  k_{ij}\right)  $. Let
$\{k_{i}^{\#}\}$ be the dual basis and hence the set of kernel functions of
$H^{\#}$ and write its Gram matrix as $\mathbf{K}^{\#}=(k_{ij}^{\#}).$ It is
always true that the transpose of $\mathbf{K,}$ $\mathbf{K}^{t}$ is equal to
the entrywise conjugate matrix $\mathbf{\bar{K}}$. It is also always true that
$\mathbf{K}^{\#}=\mathbf{K}^{-1}.$ Thus $H$ is orthogonal exactly if
$\mathbf{KK}^{t}=\mathbf{K\bar{K}}=\mathbf{KK}^{\#}=I.$ Formulated in terms of
matrix entries this is, for $1\leq i,j\leq n,$
\begin{equation}
\sum_{s=1}^{n}k_{is}k_{js}=\sum_{s=1}^{n}k_{is}\overline{k_{sj}}=\sum
_{s=1}^{n}k_{is}k_{sj}^{\#}=\delta_{ij}. \label{orthogonal matrix}%
\end{equation}

\begin{corollary}
\label{is also}If $H\in\mathcal{M}$ then $H^{\#}\in\mathcal{M}$. Specifically,
if $H$ $\sim$ $K_{B}$ where $B$ is the Blaschke product with zeros $\left\{
x_{i}\right\}  $ then $H^{\#}$ $\sim$ $K_{B^{\#}}$ where $B^{\#}$ is the
Blaschke product with zeros $\left\{  \overline{x_{i}}\right\}  .$
\end{corollary}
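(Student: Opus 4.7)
The plan is to reduce to showing $(K_B)^{\#} \sim K_{B^{\#}}$; the general statement then follows from the observation recorded in the RKHS subsection that rescaling is preserved under passing to dual kernels, so $H \sim K_B$ gives $H^{\#} \sim (K_B)^{\#}$.

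The first step is to identify the Gram matrix of $K_{B^{\#}}$ by direct computation. Its reproducing kernels at the points $\overline{x_i}$ are $\hat{k}_i(w) = (1-wx_i)^{-1}$, so
\[
\langle \hat{k}_i, \hat{k}_j\rangle = \hat{k}_i(\overline{x_j}) = \frac{1}{1 - x_i\overline{x_j}} = \overline{k_{ij}},
\]
giving $\mathbf{K}_{B^{\#}} = \overline{\mathbf{K}_B}$. The remaining task is therefore to show that $\mathbf{K}_B^{-1}$, the Gram matrix of $(K_B)^{\#}$, is a diagonal rescaling of $\overline{\mathbf{K}_B}$.

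For this I would apply Proposition \ref{exact conjugation} to the conjugation operator $J_B$ on $K_B$ (whose compatibility with the kernels is recorded in (\ref{almost conjugation})), obtaining an orthogonal rescaling $\tilde{K}_B \sim K_B$. By Proposition \ref{conjugation = orthogonal} the Gram matrix $\tilde{\mathbf{K}}_B$ is an orthogonal matrix, so $\tilde{\mathbf{K}}_B^{-1} = \tilde{\mathbf{K}}_B^{\,t} = \overline{\tilde{\mathbf{K}}_B}$, using the universal identity $\mathbf{K}^t = \overline{\mathbf{K}}$ from the last subsection. The left side is the Gram matrix of $(\tilde{K}_B)^{\#}$, while the right side is a diagonal conjugate of $\overline{\mathbf{K}_B} = \mathbf{K}_{B^{\#}}$ (since $\tilde{\mathbf{K}}_B$ is itself a diagonal conjugate of $\mathbf{K}_B$, and complex conjugation intertwines with diagonal conjugation). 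Hence $(\tilde{K}_B)^{\#} \sim K_{B^{\#}}$, and combining with $(K_B)^{\#} \sim (\tilde{K}_B)^{\#}$ finishes the argument.

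The main obstacle is purely bookkeeping — juggling transpose, complex conjugation, and diagonal rescaling while tracking the $\mathbf{K} \leftrightarrow \mathbf{K}^{-1}$ duality between $H$ and $H^{\#}$. No tools beyond the two propositions already in hand and the direct kernel computation are required; the three identities $\mathbf{K}_{B^{\#}} = \overline{\mathbf{K}_B}$, $\overline{\tilde{\mathbf{K}}_B} = \tilde{\mathbf{K}}_B^{\,t}$, and $\tilde{\mathbf{K}}_B^{\,t} = \tilde{\mathbf{K}}_B^{-1}$ (the third coming from orthogonality of the rescaled Gram matrix) combine to yield the conclusion.
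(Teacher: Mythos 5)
Your proposal is correct and follows essentially the same route as the paper: both arguments reduce to $(K_B)^{\#}\sim K_{B^{\#}}$, use the conjugation operator $J_B$ together with Propositions \ref{exact conjugation} and \ref{conjugation = orthogonal} to see that $\mathbf{K}_B$ is a rescaling of an orthogonal matrix (so that $\mathbf{K}_B^{-1}$ is a rescaling of $\overline{\mathbf{K}_B}$), and then identify $\overline{\mathbf{K}_B}=\mathbf{K}_{B^{\#}}$. You merely make explicit the kernel computation and the rescaling bookkeeping that the paper leaves as ``a direct consequence of the definitions.''
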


\begin{proof}
The Gram matrix of $H^{\#}$ is the inverse matrix of the Gram matrix of $H$
and hence is a rescaling of the inverse of $\mathbf{K}_{B}$, the Gram matrix
of $K_{B}.$ From the previous proposition $K_{B}$ is $r-$orthogonal hence
$\mathbf{K}_{B}$ is a rescaling of an orthogonal matrix and thus $\left(
\mathbf{K}_{B}\right)  ^{-1}$ is a rescaling of $\overline{\mathbf{K}_{B}}$,
the matrix of complex conjugates of entries of $\mathbf{K}_{B}$. It is a
direct consequence of the definitions that $\overline{\mathbf{K}_{B}}$
$=\mathbf{K}_{B^{\#}}$. In sum, the Gram matrix of $H^{\#}$ is a rescaling of
the Gram matrix of $K_{B^{\#}}.$ Hence $H^{\#}$ $\sim$ $K_{B^{\#}}.$
\end{proof}

This corollary shows that the Gram matrix of $K_{B}$ is a rescalings of the
complex conjugate of the Gram matrix of $K_{B}^{\#}.$ A similar result holds
for some infinite Blaschke products $B$ and that fact has been used in the
study of $H^{2}$ interpolating sequences \cite[Exercise 9.54]{AM} \cite[Sec.
7.3.1]{ARSW2}.

Because $H^{\#\#}=H$ the converse of the corollary also holds and thus we have
an necessary and sufficient condition for $H^{\#}\in\mathcal{M}$. On the other
hand it is not clear what conditions insure $H^{\#}\in\mathcal{F}$. We discuss
that question briefly in Section\ \ref{C}.

\subsection{Multiplier Algebras}

If $H\in\mathcal{F}$ then there is a particularly close relationship between
$H$ and $M(H).$ In particular suppose $X$ is the index set of $\mathfrak{K}%
(H),$ $Y\subset X$ and $x\in X\smallsetminus Y.$ Let $m(Y,x)\in M(H)$ be the
multiplier of norm one which vanishes on $Y$ and maximizes $\operatorname{Re}%
m(Y,x).$ Let $h(Y,x)$ be the vector in $H$ of norm one which on $Y$ and
maximizes $\operatorname{Re}h(Y,x).$

\begin{proposition}
[{\cite[Prop. 6.27]{ARSW2}}]\label{extreme}In the situation just described and
with $k_{x}$ the reproducing kernel for $x$
\begin{equation}
m(Y,x)\frac{k_{x}}{\left\Vert k_{x}\right\Vert }=h(Y,x). \label{extremal}%
\end{equation}

\end{proposition}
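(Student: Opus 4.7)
The plan is to match both extremal objects with the projection $P_{E}k_{x}$ of $k_{x}$ onto the subspace $E\subset H$ of vectors vanishing on $Y$, using the Pick property as the bridge between the multiplier optimization and the Hilbert-space optimization, and then appealing to the uniqueness of the Cauchy--Schwarz extremal to identify the resulting function with $h(Y,x)$.

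First, treat the Hilbert-space side directly. Writing $E=(\mathrm{span}\{k_{y}:y\in Y\})^{\perp}$, so that $E$ consists exactly of the $f\in H$ with $f|_{Y}=0$, standard extremal theory identifies $h(Y,x)=P_{E}k_{x}/\|P_{E}k_{x}\|$, with extremal value $\operatorname{Re}\langle h(Y,x),k_{x}\rangle=\|P_{E}k_{x}\|$.

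Next, evaluate $m(Y,x)(x)$ using the Pick property. Apply the scalar Pick theorem to the subset $Y\cup\{x\}$ with prescribed values $0$ on $Y$ and $w$ at $x$: such a multiplier of norm at most one exists iff the Pick matrix
\[
\begin{pmatrix} A & b \\ b^{*} & (1-|w|^{2})c \end{pmatrix}
\]
is positive semidefinite, where $A$ is the Gram matrix of $\{k_{y}\}_{y\in Y}$, the column $b$ has entries $\langle k_{x},k_{y_{i}}\rangle$, and $c=\|k_{x}\|^{2}$. A Schur-complement computation identifies $b^{*}A^{-1}b$ with $\|P_{Y}k_{x}\|^{2}$, where $P_{Y}$ is projection onto $\mathrm{span}\{k_{y}:y\in Y\}$. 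The PSD condition then reduces to $|w|\leq\|P_{E}k_{x}\|/\|k_{x}\|$, and rotating the phase I obtain $m(Y,x)(x)=\|P_{E}k_{x}\|/\|k_{x}\|$.

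Finally, set $f:=m(Y,x)\,k_{x}/\|k_{x}\|$. Then $f\in H$ with $\|f\|\leq\|m(Y,x)\|_{M(H)}\leq 1$, $f$ vanishes on $Y$ because $m(Y,x)$ does, and $f(x)=m(Y,x)(x)\|k_{x}\|=\|P_{E}k_{x}\|$. So $f$ is a unit-ball vector of $E$ that attains the extremum $\|P_{E}k_{x}\|$ of $\operatorname{Re}\langle\cdot,k_{x}\rangle$, and the equality case of the Cauchy--Schwarz bound $|\langle f,k_{x}\rangle|=|\langle f,P_{E}k_{x}\rangle|\leq\|f\|\,\|P_{E}k_{x}\|$ forces $f=h(Y,x)$. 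The main obstacle is the middle step: correctly invoking the Pick theorem to convert the multiplier extremal problem into the explicit quantity $\|P_{E}k_{x}\|/\|k_{x}\|$. Once the identification $b^{*}A^{-1}b=\|P_{Y}k_{x}\|^{2}$ is established, the projection-uniqueness argument is routine.
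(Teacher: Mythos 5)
Your argument is correct. Note that the paper itself gives no proof of this proposition---it is quoted from \cite[Prop.\ 6.27]{ARSW2}---so there is nothing to compare line by line; but your route (identify $h(Y,x)=P_{E}k_{x}/\Vert P_{E}k_{x}\Vert$ by Cauchy--Schwarz, compute the multiplier extremal value as $\Vert P_{E}k_{x}\Vert/\Vert k_{x}\Vert$ via the Pick matrix and a Schur complement, then observe that $m(Y,x)k_{x}/\Vert k_{x}\Vert$ is a unit-ball competitor attaining that value and invoke uniqueness of the Cauchy--Schwarz extremal) is the standard proof and all three steps check out, including the identity $b^{*}A^{-1}b=\Vert P_{Y}k_{x}\Vert^{2}$ and the Pythagorean reduction $(1-|w|^{2})\Vert k_{x}\Vert^{2}\geq\Vert P_{Y}k_{x}\Vert^{2}\iff|w|\leq\Vert P_{E}k_{x}\Vert/\Vert k_{x}\Vert$. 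The only step you leave implicit is that the extremal problem over $M(H)$ has the same value as the Pick problem posed only on $Y\cup\{x\}$; this uses the norm-preserving extension property of complete Pick kernels recorded in the paper's discussion of regular subspaces, and is exactly where the hypothesis $H\in\mathcal{F}$ enters, so it deserves a sentence rather than silence.
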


With this as a starting point it is not hard to show that for $H\in
\mathcal{F}$ the metric $\delta$ on $X,$ the index set of $\mathfrak{K}(H),$
is the same as the Gleason metric on $X$ induced by $M(H),$ that is%
\begin{equation}
\delta(x,y)=\max\left\{  \operatorname{Re}m(x):m\in M(H),m(y)=0,\left\Vert
m\right\Vert =1\right\}  , \label{gleason}%
\end{equation}
\cite[Remrk 7.2]{ARSW2}. The equality of the term on the left (\ref{disk psh})
and the last two terms on the right is an instance of this general fact.

\subsection{Drury Arveson Spaces}

For $m=1,2,....$ the Drury-Arveson space $DA_{m}$ is the Hilbert space of
functions defined on the complex $m-$ball $\mathbb{B}^{m}$ which is the
closure of the span of the reproducing kernels $\left\{  k_{z}(w)=\frac
{1}{1-\left\langle w,z\right\rangle }:z\in\mathbb{B}^{m}\right\}  .$ Here
$\left\langle w,z\right\rangle $ is the standard Hermitian inner product on
$\mathbb{C}^{n}.$ References for this space and its properties include
\cite{AM}, \cite{ARSW2}, and \cite{Sh}.

For $X$ a finite subset of some $\mathbb{B}^{m}$ we define the associated
space $DA_{m}(X)$ to be the subspace of $DA_{m}$ spanned by $\left\{
k_{x}:x\in X\right\}  $ and having those functions as kernel functions. In
particular if $m=1$ then the kernel functions of $DA_{1}(X)$ are the same as
those of the model space $K_{B_{X}}$ for $B_{X}$ the Blaschke product with
zeros at the points of $X;$ thus those two spaces are the same (or,
pedantically, are trivial rescalings of each other). When $m>1$ then the
details of our discussion are effectively independent of $m$ and we will not
keep track of that index; this is discussed in \cite{R1}.

\begin{definition}
\label{pick}We say $H\in\mathcal{F}$ if there is an integer $m$ and finite
$X(H)\subset\mathbb{B}^{m}$ such that $H\sim$ $DA_{m}(X(H)).$
\end{definition}

Although this definition suits our purposes it is very different from the
traditional definition of finite dimensional RKHS with complete Pick kernels.
The equivalence of our definition with the traditional one is a basic theorem
in the subject \cite{AM}.

\subsection{Hyperbolic Geometry}

The unit ball $\mathbb{B}^{m}\subset\mathbb{C}^{m}$ is a model for complex
hyperbolic $m-$space $\mathbb{CH}^{m}$ . This is discussed in detail in
\cite{Go}; here we will just recall a few pieces of information that we need.

The space $\mathbb{CH}^{m}$ carries a transitive set of orientation preserving
automorphisms. In the ball model these are realized by the group of conformal
automorphisms of $\mathbb{B}^{m}.$

The space $\mathbb{CH}^{m}$ carries several natural metrics, of particular
interest to us is the pseudohyperbolic metric $\Delta$ which can be defined by
setting, for $z\in\mathbb{B}^{m},$ $\Delta(0,z)=\left\vert z\right\vert $ and
requiring $\Delta$ to be invariant under the automorphism group. (We note for
context, but will not use the fact, that the length metric generated by
$\Delta$ is the classical Bergman metric on the ball.) In particular, if
$z,w\in\mathbb{B}^{1}$ then%
\begin{equation}
\Delta(z,w)=\left\vert \frac{z-w}{1-\bar{z}w}\right\vert . \label{delta}%
\end{equation}

The disk $\mathbb{B}^{1}$ sits inside $\mathbb{B}^{m}=\mathbb{CH}^{m}$ as a
totally geodesically embedded manifold of complex dimension one, called a
complex geodesic. All the other totally geodesically embedded complex one
manifolds, the other complex geodesics, are the images of that unit disk under
the group of conformal automorphisms of the ball. In particular the group of
automorphisms acts transitively on the set of complex geodesics.

\subsection{Hyperbolic Geometry and $\mathcal{F}$}

The structure of the spaces $DA_{m}(X)$ is closely related to the geometry of
$X$ regarded as a subset of $\mathbb{CH}^{n}.$ This theme is developed in
\cite{ARSW1}, \cite{R1}, and \cite{R2}.

Comparing (\ref{delta}) with (\ref{disk psh}) we see that if $B$ is a Blaschke
product which vanishes at $z$ and $w$ and possibly other points, and with
$\delta$ denoting the metric on $\mathfrak{K}(B)$ then we have $\Delta
(z,w)=\delta(z,w).$ In fact this is the general pattern.{}

\begin{proposition}
[{\cite[4.1]{R1}}]Suppose $X$ is a finite set in some $\mathbb{B}^{m}$ with
$z,w\in X.$ Let $\delta$ be the metric on $X$ which is the index set of
$\mathfrak{K}(DA_{m}(X))$ and let $\Delta$ be the pseudohyperbolic metric on
$\mathbb{CH}^{m}$ restricted to $X,$ then $\Delta(z,w)=\delta(z,w).$
\end{proposition}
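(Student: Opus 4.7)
The plan is to compute both metrics explicitly in closed form and observe they coincide. Since the equivalence relation $\sim$ preserves $\delta$, I may replace $DA_{m}(X)$ by any rescaling, so I work directly with the standard kernels $k_{z}(u)=1/(1-\langle u,z\rangle)$, $z\in X$. The reproducing property gives $\langle k_{z},k_{w}\rangle=k_{z}(w)=1/(1-\langle w,z\rangle)$ and in particular $\|k_{z}\|^{2}=1/(1-|z|^{2})$. Substituting into the general formula (\ref{distance}) yields
\begin{equation*}
\delta(z,w)^{2}=1-\frac{|\langle k_{z},k_{w}\rangle|^{2}}{\|k_{z}\|^{2}\|k_{w}\|^{2}}=1-\frac{(1-|z|^{2})(1-|w|^{2})}{|1-\langle w,z\rangle|^{2}}.
\end{equation*}

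The remaining task is to establish the parallel identity
\begin{equation*}
\Delta(z,w)^{2}=1-\frac{(1-|z|^{2})(1-|w|^{2})}{|1-\langle z,w\rangle|^{2}}
\end{equation*}
for the pseudohyperbolic metric on $\mathbb{CH}^{m}$; note $|1-\langle w,z\rangle|=|1-\langle z,w\rangle|$, so the two right-hand sides match. By the characterization of $\Delta$ recalled in the hyperbolic geometry subsection, it suffices to verify that the right-hand side (i) is invariant under the automorphism group of $\mathbb{B}^{m}$, and (ii) reduces to $|z|^{2}$ when $w=0$. Item (ii) is immediate: $1-(1-|z|^{2})=|z|^{2}$.

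For item (i), invariance under unitary rotations is obvious. The nontrivial step is invariance under the Moebius-type ball automorphisms $\varphi_{a}$, which rests on the classical identity
\begin{equation*}
1-\langle\varphi_{a}(z),\varphi_{a}(w)\rangle=\frac{(1-|a|^{2})(1-\langle z,w\rangle)}{(1-\langle z,a\rangle)(1-\langle a,w\rangle)}.
\end{equation*}
Specializing $w=z$ gives the parallel formula for $1-|\varphi_{a}(z)|^{2}$; taking moduli and multiplying the three relations (the one for $z$, the one for $w$, and the modulus-squared of the original) shows the fraction $(1-|z|^{2})(1-|w|^{2})/|1-\langle z,w\rangle|^{2}$ is preserved by $\varphi_{a}$. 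Together with rotation invariance and the transitivity of the automorphism group on $\mathbb{B}^{m}$, this secures (i) and completes the proof.

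The main obstacle is step (i): one needs the transformation identity for $1-\langle\varphi_{a}(z),\varphi_{a}(w)\rangle$ under the ball automorphisms. This is a standard computation in several complex variables and, not coincidentally, is the same identity that underlies the complete Pick property of the Drury--Arveson kernel; everything else in the argument is bookkeeping with (\ref{distance}) and the definition of $\Delta$.
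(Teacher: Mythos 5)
Your proposal is correct. Note that the paper itself offers no proof of this proposition --- it is quoted from \cite[Prop.~4.1]{R1} --- so there is nothing internal to compare against; your direct computation is the natural argument. The two halves check out: the formula $\delta(z,w)^{2}=1-(1-|z|^{2})(1-|w|^{2})/|1-\langle w,z\rangle|^{2}$ follows immediately from (\ref{distance}) and the reproducing property of the Drury--Arveson kernels (no rescaling is even needed, since $DA_{m}(X)$ is already defined with the standard kernels), and the identification of the right-hand side with $\Delta(z,w)^{2}$ correctly exploits the paper's characterization of $\Delta$ as the automorphism-invariant function with $\Delta(0,z)=|z|$, via the classical identity $1-\langle\varphi_{a}(z),\varphi_{a}(w)\rangle=(1-|a|^{2})(1-\langle z,w\rangle)/\bigl((1-\langle z,a\rangle)(1-\langle a,w\rangle)\bigr)$ together with the fact that unitaries and the maps $\varphi_{a}$ generate the automorphism group. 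The only step you lean on without proof is that classical identity itself, but citing it as standard (it is Theorem 2.2.2 in Rudin's book on the ball, for instance) is entirely reasonable at the level of this paper, and your observation that it is the same identity driving the Pick/M\"obius structure of the Drury--Arveson kernel is apt.
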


There is another fundamental relation between elements of $\mathcal{F}$ and
hyperbolic geometry. Given $H\in\mathcal{F}$ we know from Definition
\ref{pick} that $H\sim$ $DA_{m}(X)$ for some finite $X.$ In fact we can
describe the possible choices for $X.$

\begin{theorem}
[{\cite[Thm 7]{R1}}]\label{R1}Suppose $X,Y$ are finite sets in some
$\mathbb{CH}^{m}.$ $DA(X)\sim$ $DA(Y)$ if and only if there is an automorphism
$\Phi$ of $\mathbb{CH}^{m}$ with $\Phi(X)=Y.$
\end{theorem}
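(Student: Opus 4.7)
My plan is to prove the two implications separately, using only results already stated in the excerpt together with standard facts about Möbius automorphisms of $\mathbb{B}^m$.

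For the reverse direction ($\Leftarrow$), suppose $\Phi\in\mathrm{Aut}(\mathbb{B}^m)$ with $\Phi(X)=Y$. The key tool is the covariance of the Drury--Arveson kernel under ball automorphisms: there exists a non-vanishing holomorphic function $c_\Phi$ on $\mathbb{B}^m$ such that
\[
\frac{1}{1-\langle \Phi(w),\Phi(z)\rangle}\;=\;c_\Phi(z)\,\overline{c_\Phi(w)}\,\frac{1}{1-\langle w,z\rangle}.
\]
This follows from the standard identity
\[
1-\langle \Phi(z),\Phi(w)\rangle=\frac{(1-|a|^2)(1-\langle z,w\rangle)}{(1-\langle z,a\rangle)(1-\langle a,w\rangle)}
\]
for the Möbius map sending $a\in\mathbb{B}^m$ to $0$, combined with the trivial fact that unitary rotations preserve $\langle\cdot,\cdot\rangle$ (every ball automorphism factors as such a rotation composed with one such Möbius map). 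Taking $\lambda_i=c_\Phi(x_i)$ then exhibits the Gram matrix of $DA_m(Y)$ as a diagonal rescaling of the Gram matrix of $DA_m(X)$, i.e.\ $DA_m(X)\sim DA_m(Y)$.

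For the forward direction ($\Rightarrow$), assume $DA_m(X)\sim DA_m(Y)$ with $X=\{x_i\}_{i=1}^n$ and $Y=\{y_i\}_{i=1}^n$, so there are nonzero scalars $\lambda_i$ with
\[
\frac{1}{1-\langle y_i,y_j\rangle}=\lambda_i\overline{\lambda_j}\,\frac{1}{1-\langle x_i,x_j\rangle},\qquad 1\le i,j\le n.
\]
I first reduce to the case $x_1=y_1=0$. Let $\phi_{x_1}$ and $\phi_{y_1}$ be the Möbius automorphisms of $\mathbb{B}^m$ sending $x_1$ and $y_1$ to the origin. By the reverse direction already proved, $DA_m(\phi_{x_1}(X))\sim DA_m(X)\sim DA_m(Y)\sim DA_m(\phi_{y_1}(Y))$, so replacing $X$ by $\phi_{x_1}(X)$ and $Y$ by $\phi_{y_1}(Y)$ we may assume $x_1=y_1=0$. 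Setting $i=j=1$ in the rescaling identity gives $|\lambda_1|^2=1$; setting $j=1$ (resp.\ $i=1$) gives $\lambda_i=\lambda_1$ for every $i$. Calling this common unimodular constant $\mu$ and feeding back, we obtain $|\mu|^2(1-\langle x_i,x_j\rangle)=1-\langle y_i,y_j\rangle$, hence
\[
\langle x_i,x_j\rangle=\langle y_i,y_j\rangle,\qquad 1\le i,j\le n.
\]

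The second step is standard linear algebra: equality of Hermitian Gram matrices implies that the assignment $x_i\mapsto y_i$ extends to a linear isometry of $\mathrm{span}(X)\subset\mathbb{C}^m$ onto $\mathrm{span}(Y)$, which in turn extends to a unitary $U$ of $\mathbb{C}^m$. Since $U$ preserves $\mathbb{B}^m$ and fixes $0$, it is an automorphism of $\mathbb{CH}^m$. Undoing the normalization, the required automorphism of $\mathbb{CH}^m$ carrying $X$ onto $Y$ is
\[
\Phi=\phi_{y_1}^{-1}\circ U\circ\phi_{x_1}.
\]

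The only real obstacle is assembling the covariance identity for the Drury--Arveson kernel cleanly; once it is in hand, the reverse direction is immediate and the forward direction becomes a one-point Möbius normalization together with the familiar fact that equal Gram matrices produce a common unitary realization.
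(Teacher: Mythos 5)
Your proof is correct: the M\"obius covariance identity for the Drury--Arveson kernel gives the reverse implication, and after normalizing $x_1=y_1=0$ the rescaling constants collapse to a single unimodular scalar, so the Gram matrices $\left(\langle x_i,x_j\rangle\right)$ and $\left(\langle y_i,y_j\rangle\right)$ coincide and the standard unitary-extension argument produces the required automorphism $\phi_{y_1}^{-1}\circ U\circ\phi_{x_1}$. Note that the paper itself gives no proof of this statement---it is imported from \cite[Thm 7]{R1}---but your argument is the standard one and is essentially the proof given in that reference, so there is nothing to flag beyond the routine check that equal Gram matrices make $x_i\mapsto y_i$ well defined even when the $x_i$ are linearly dependent, which you have implicitly covered.
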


In sum, questions of equivalence of elements of $\mathcal{F}$ under rescaling
are equivalent to questions about congruence of finite sets in complex
hyperbolic space.

\subsection{Subspaces\label{subspaces}}

Suppose $H$ is a finite dimensional RKHS with kernel functions $\left\{
k_{\alpha}\right\}  _{\alpha\in X}.$ We say $J$ is a regular subspace of $H$
if there is a $Y\subset X$ and $J$ is the Hilbert space spanned by $\left\{
k_{\beta}\right\}  _{\beta\in Y}$ and is regarded as a RKHS with the $\left\{
k_{\beta}\right\}  _{\beta\in Y}$ as its its kernel functions. It follows from
the definitions that if $H\in\mathcal{F}$ then $J\in\mathcal{F}$, and
similarly for membership in $\mathcal{M}$.

Statements which can be formulated using Gram matrix entries can be passed
between a space $H$ and a regular subspace $J$. For instance, given $H,J,X,Y$
as above and $y,y^{\prime}\in Y$ then the value $\delta(k_{y},k_{y^{\prime}})$
does not depend on whether we regard the kernel functions as in $J$ or in $H.$
Another instance of this, one we use in the proof below in showing Statement
(2) implies Statement (1), is that if $J$ is a regular subspace of a $DA(X),$
and hence is $DA(Y)$ for some $Y\subset X$ then knowing that three points of
$Y$ are in a complex geodesic implies that the "same" points, regarded as
inside of $X,$ and hence as index points for kernel functions in $DA(X)$, are
also in a complex geodesic.

Another fact about regular subspaces which we will use is that if
$H\in\mathcal{F}$ and $J$ is a regular subspace of $H$ then it is a
consequence of the Pick property of $H$ than given a multiplier $m_{J}\in
M(J)$ there is an extension to a multiplier $m_{H}\in M(H)$, defined on all of
$H,$ with the same norm, $\left\Vert m_{J}\right\Vert =\left\Vert
m_{h}\right\Vert .$

At times we will use these facts without mention.

\section{The Results}

Our main result is the following

\begin{theorem}
\label{A} Suppose $H\in\mathcal{F}$, that is for some $m,$ and $X=\left\{
x_{i}\right\}  _{i=1}^{n}\subset\mathbb{B}^{m},$ we have $H\sim$ $DA_{m}(X)$.
The following are equivalent:

\begin{enumerate}
\item $X$ lies in a single complex geodesic in $\mathbb{B}^{m}=\mathbb{CH}%
^{m}.$

\item For each $1<i<j\leq n$, $\left\{  x_{1},x_{i},x_{j}\right\}  $ lies in a
single complex geodesic in $\mathbb{B}^{m}=\mathbb{CH}^{m}.$

\item There is a renumbering of $X$ after which, with $m_{1}\in M(H)$ the
multiplier of norm one which satisfies $m_{1}(x_{j})=0,$ $j=2,...,n$ and which
maximizes $\operatorname{Re}m_{1}(x_{1})\ $we have
\begin{equation}
m_{1}(x_{1})=\prod_{j=2}^{n}\delta(x_{1},x_{j}). \label{multiplier = product}%
\end{equation}

\item $H$ is $r-$orthogonal. That is $H\sim$ $\tilde{H}$ for some $\tilde{H}$
which carries a conjugation operator taking the basis of kernels of $H$ to the
dual basis.

\item $H\sim$ $\tilde{H}$ \ for an $\tilde{H}$ which has a Gram matrix which
is an orthogonal matrix.

\item $H$ is an $r-$model space; that is there is a finite Blaschke product
with simple zeros $B$ such that $H\sim$ $K_{B}.$
\end{enumerate}
\end{theorem}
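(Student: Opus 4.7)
The plan is to run a cycle with $(6)$ at the hub. The link $(1)\Leftrightarrow(2)$ is immediate, since two distinct points of $\mathbb{CH}^m$ determine a unique complex geodesic: if every triple $\{x_1,x_2,x_j\}$ lies in a complex geodesic, that geodesic must be the unique one through $x_1,x_2$, forcing $X$ into it. For $(1)\Leftrightarrow(6)$ I invoke Theorem~\ref{R1}. Given $(1)$, transitivity of the automorphism group of $\mathbb{CH}^m$ on complex geodesics produces an automorphism $\Phi$ sending the geodesic carrying $X$ onto $\mathbb{B}^1\hookrightarrow\mathbb{B}^m$, so $H\sim DA_m(\Phi(X))=DA_1(\Phi(X))=K_B$ for $B$ the Blaschke product with zeros $\Phi(X)$. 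Conversely, $H\sim K_B$ gives $H\sim DA_m(X')$ with $X'\subset\mathbb{B}^1$, and Theorem~\ref{R1} furnishes an automorphism carrying $X'$ to $X$, placing $X$ inside a complex geodesic. The step $(6)\Rightarrow(4)$ is immediate from $(\ref{almost conjugation})$ and Proposition~\ref{exact conjugation} applied to $J_B$, and $(4)\Leftrightarrow(5)$ is Proposition~\ref{conjugation = orthogonal}.

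Slotting in $(3)$: for $(1)\Rightarrow(3)$, once we are in $K_B$, the extremal unit-norm multiplier vanishing at $x_2,\ldots,x_n$ is a unimodular scalar multiple of the Blaschke product $\prod_{j=2}^{n}B_j=B/B_1$, with value at $x_1$ of modulus $\prod_{j=2}^{n}\delta(x_1,x_j)$ by $(\ref{disk psh})$; renumbering and rotating by a unimodular constant yields $(\ref{multiplier = product})$. For $(3)\Rightarrow(2)$ I argue by subspaces. Fix a triple $\{x_1,x_i,x_j\}$ and let $J$ be the associated regular subspace. Let $\mu$ be the unit-norm extremal in $M(J)$ vanishing at $x_i,x_j$, extend it via Pick to $\tilde\mu\in M(H)$ of the same norm, and form the product with unit-norm multipliers in $M(H)$ vanishing at each remaining $x_k$ (whose values at $x_1$ equal $\delta(x_1,x_k)$ by $(\ref{gleason})$). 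This product is a candidate for the $n$-point extremal problem, and comparison with $(\ref{multiplier = product})$ forces $\mu(x_1)=\delta(x_1,x_i)\delta(x_1,x_j)$. Combined with $(1)\Rightarrow(3)$ applied inside the three-point space $J$, together with the observation that for three-point Pick problems the two-constraint extremal equals the product of pairwise distances only when the three points lie on a common complex geodesic, this forces $\{x_1,x_i,x_j\}$ onto a complex geodesic, yielding $(2)$.

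The remaining implication is closing the loop back to the geometric side; I expect this to be the main obstacle, and the natural target is $(5)\Rightarrow(1)$. By $(\ref{orthogonal matrix})$, $(5)$ asserts that after rescaling $\mathbf{K}^{-1}=\overline{\mathbf{K}}$, and in DA coordinates this is a rigid algebraic identity on $X$. My plan is to exploit Corollary~\ref{is also}: condition $(5)$ implies that $H^{\#}$ is equivalent to the rescaling of $H$ whose DA parameter set is the coordinate conjugate of $X$, so Theorem~\ref{R1} yields an automorphism of $\mathbb{CH}^m$ pairing $X$ with its conjugate. Combining this symmetry with the partially established equivalence of $(1)$ and the extremal Pick data from $(3)$ should localize $X$ on a complex geodesic. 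The delicate point is converting the matrix identity $\mathbf{K}^{-1}=\overline{\mathbf{K}}$ into a genuinely geometric fixed-geodesic statement, and this is where I anticipate the author's effort concentrates.
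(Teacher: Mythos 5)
Most of your cycle matches the paper: $(1)\Leftrightarrow(2)$, $(1)\Leftrightarrow(6)$ via Theorem~\ref{R1}, $(6)\Rightarrow(4)$ via Proposition~\ref{exact conjugation}, $(4)\Leftrightarrow(5)$ via Proposition~\ref{conjugation = orthogonal}, and the Blaschke-product computation for $(6)\Rightarrow(3)$ are all as in the paper. Your $(3)\Rightarrow(2)$ is also essentially the paper's argument (products of the Gleason extremals $m_{1k}$ as competitors in the $n$-point problem force the three-point extremal value to equal $\delta(x_1,x_i)\delta(x_1,x_j)$), though the ``observation'' you then lean on --- that equality in the three-point extremal problem forces the triple onto a complex geodesic --- is not an observation but Proposition~\ref{3 implies 6}, i.e.\ Proposition 27 of \cite{R1}, a substantial result proved there by a detailed analysis of three-point $DA(X)$. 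Since the paper also cites it rather than reproving it, that is acceptable, but you should recognize you are invoking a theorem.

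The genuine gap is the return from $(4)/(5)$ to the geometric conditions, which you leave as a ``plan,'' and the plan as sketched does not work. From $(5)$ you get $\mathbf{K}^{-1}=\overline{\mathbf{K}}$ after rescaling, hence $H^{\#}\sim DA(\bar X)$; but Theorem~\ref{R1} compares two spaces each presented as $DA$ of a specified point set, and it gives you an automorphism relating the parameter sets of $H^{\#}$ and $DA(\bar X)$ --- it does not produce an automorphism ``pairing $X$ with its conjugate,'' because $X$ is the parameter set of $H$, not of $H^{\#}$. (Also, Corollary~\ref{is also} has $H\in\mathcal{M}$ as its \emph{hypothesis}, so invoking it here is circular.) The paper's actual route is entirely different and is where its main technical work sits: one shows that if the Gram matrix of $H$ is orthogonal then the regular subspace $H_{-}$ spanned by $n-1$ of the kernels is again $r$-orthogonal. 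This is done by computing the dual basis of $H_{-}$ as the orthogonal projection of $\{k_j^{\#}\}$ along $k_n^{\#}$, giving $g_{ij}=k_{ij}^{\#}-k_{in}^{\#}k_{nj}^{\#}/k_{nn}^{\#}$, and verifying $\sum_{s=1}^{n-1}k_{is}g_{sj}=\delta_{ij}$ using the orthogonality relations (\ref{use}). Iterating reduces to three-dimensional regular subspaces, where Proposition~\ref{4 implies 6} (Theorem 28 of \cite{R1}, with the non-degeneracy hypothesis) puts each triple on a complex geodesic, yielding $(2)$ and closing the loop. Without this descent-to-subspaces argument, or some substitute for it, your proof of the theorem is incomplete.
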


\section{The Proofs{}}

Some parts of the proof follow from our earlier discussion and we begin with
those. First we show (1) and (6) are equivalent. Suppose (6) holds and let
$X(B)$ be the zero set of the Blaschke product $B.$ As we noted earlier the
spaces $K_{B}$ and $DA_{1}(X(B))$ have the same kernel functions and hence are
the same space. Furthermore $DA_{1}(X(B))$ is in the form described in (1),
that is $X(B)$ is in the unit disk which is a complex geodesic, both on its
own and as a subset of any larger $\mathbb{B}^{m}=\mathbb{CH}^{m}.$ On the
other hand, if (1) holds then we can use the fact that the group of
automorphisms acts transitively on the set of complex geodesics to select an
automorphism $\Phi$ mapping the geodesic containing $X$ to the unit disk. By
Theorem \ref{R1} $DA_{m}(X)\sim$ $DA_{1}(\Phi(X))$ and as before $DA_{1}%
(\Phi(X))\sim K_{B}$ where $B$ is now the Blaschke product with zeros at
$\Phi(X).$ Thus we have (6) as required.

That (4) and (5) are equivalent is Proposition \ref{conjugation = orthogonal}.

It is immediate that (1) implies (2). To see that (2) implies (1) recall that
any two points in $\mathbb{CH}^{n}$ are contained in a unique complex
geodesic. Hence, by the same argument we would use to study colinear points in
Euclidian space we see that if (2) holds then so does (1).

The demonstrations that (6) implies (4) and that (6) implies (3) both use the
function theory of the Hardy space applied to $K_{B}\subset H^{2}.$ That (6)
implies (4) follows from Proposition \ref{exact conjugation} and the
discussion proceeding it. To see that (6) implies (3) note that from classical
function theory on the Hardy space, in particular Pick's theorem, the extreme
value $m_{1}(x_{1})$ for the multiplier $m_{1}\in M(K_{B})$ is the same as the
extreme value for the $H^{\infty}$ interpolation problem of finding.%
\[
\max\left\{  \operatorname{Re}g(x_{1}):g(x_{2})=...=g(x_{n})=0,\text{ }%
\sup_{z\in\mathbb{B}^{1}}\left\vert g(z)\right\vert \leq1\right\}  .
\]
That problem is solved using Blaschke products and we find that the maximum is
$\left\vert D(x_{1})\right\vert $ where $D$ is a Blaschke product with zeros
$\left\{  x_{2},...,x_{n}\right\}  .$ The relation between Blaschke factors
and $\delta$ given in (\ref{disk psh}) completes the argument.

It remains to show that (3) implies (6) and that (4) implies (6). If $H$ is
three dimensional then both implications are known. We recall those results
and then use them to pass to the general cases.

\begin{proposition}
\label{3 implies 6}Suppose $H\in\mathcal{F}$ is three dimensional with kernel
functions $\left\{  k_{a},k_{b},k_{c}\right\}  .$ Let $s\in M(H)$ the
multiplier of norm one which vanishes at $k_{b}$ and $k_{c}$ and maximizes
$\operatorname{Re}s(k_{a}).$ If
\begin{equation}
s(k_{a})=\delta(k_{a},k_{b})\delta(k_{a},k_{c}). \label{goal}%
\end{equation}
then $H\in\mathcal{M}$.
\end{proposition}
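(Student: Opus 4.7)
The plan is to pass via Theorem~\ref{R1} to a convenient geometric model, recognize the hypothesis as equality in a universal Pick-type lower bound on $s(k_a)$, and then extract from that equality the collinearity in complex hyperbolic space which is condition~(1) of Theorem~\ref{A}.

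First I would apply an automorphism of $\mathbb{CH}^m$ sending $x_a$ to the origin, so that $H \sim DA_m(\{0, x_b, x_c\})$. In this normalization $\delta(k_a, k_b) = |x_b|$ and $\delta(k_a, k_c) = |x_c|$, so the hypothesis reads $s(0) = |x_b|\,|x_c|$.

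Next I would verify that $|x_b||x_c|$ is a universal lower bound for the extremal value. By (\ref{gleason}) applied to each of the regular two-dimensional subspaces $\operatorname{span}\{k_a, k_b\}$ and $\operatorname{span}\{k_a, k_c\}$, there are unit-norm multipliers $m_b$, $m_c$ on those subspaces realizing $m_b(0) = |x_b|$, $m_b(x_b) = 0$ and $m_c(0) = |x_c|$, $m_c(x_c) = 0$. By the Pick extension property noted in Section~\ref{subspaces}, each extends to a unit-norm element of $M(H)$; their product is admissible for the extremal problem defining $s$, so $s(0) \ge |x_b|\,|x_c|$ in general, and the hypothesis is the equality case.

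The main obstacle, and the step I expect to absorb most of the work, is converting this equality into geometric collinearity. I would study the Pick matrix for the three-point interpolation data $\bigl((0, x_b, x_c), (|x_b||x_c|, 0, 0)\bigr)$,
\[
P = \left[\frac{1 - w_i\overline{w_j}}{1 - \langle z_i, z_j\rangle}\right]_{i,j=1}^{3}.
\]
Solvability with $\|s\|_{M(H)} \le 1$ is equivalent to $P \succeq 0$, and attainment by a unit-norm multiplier at this maximal value corresponds to $\det P = 0$. A Schur-complement reduction about the $(1,1)$ entry should collapse $\det P = 0$ to the single scalar equation $|\langle x_b, x_c\rangle|^2 = |x_b|^2\,|x_c|^2$. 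Cauchy--Schwarz equality then forces $x_c = \lambda x_b$ for some $\lambda \in \mathbb{C}$, placing the three points on the complex geodesic $(\mathbb{C}\,x_b) \cap \mathbb{B}^m$. This is condition~(1) of Theorem~\ref{A} for the triple $\{x_a, x_b, x_c\}$, so the equivalence (1)$\iff$(6) established earlier in this section yields $H \in \mathcal{M}$.
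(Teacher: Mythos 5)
The paper does not actually prove this proposition: it quotes it as Proposition 27 of \cite{R1}, remarking only that the proof there rests on a detailed analysis of $DA(X)$ and its multipliers for three-point sets. Your argument is therefore a self-contained proof where the paper supplies none, and it is correct. The normalization $x_a=0$ via Theorem \ref{R1}, the lower bound $s(0)\geq |x_b|\,|x_c|$ obtained by multiplying extremal two-point multipliers (the same device the paper itself uses later in showing that Statement (3) implies Statement (6) in general dimension), and the identification of the hypothesis with singularity of the Pick matrix at the extremal value are all sound: writing $P(t)=G-t^{2}e_{1}e_{1}^{\ast}$ with $G$ the positive definite Gram matrix, the extremal $t$ is the largest one keeping $P(t)\succeq 0$, so $\det P=0$ there. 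One step is stated a little too optimistically: with $u=\langle x_b,x_c\rangle$ and $p=|x_b|\,|x_c|$, the Schur complement identity $\det P=(1-p^{2})\det S=0$ collapses not directly to $|u|^{2}=p^{2}$ but to $p^{2}|1-u|^{2}=|u-p^{2}|^{2}$; expanding gives $(p^{2}-|u|^{2})(1-p^{2})=0$, and since $p<1$ you do land on $|u|=p$, so the conclusion stands after one more line of algebra. You should also record that $x_b,x_c\neq 0$ (they are distinct from $x_a=0$), so that Cauchy--Schwarz equality really yields $x_c=\lambda x_b$ and hence containment of all three points in the complex geodesic $(\mathbb{C}x_b)\cap\mathbb{B}^{m}$; and the appeal to the equivalence of (1) and (6) is non-circular, since that equivalence is established in the paper independently of this proposition. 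What your route buys is an explicit, checkable computation in place of a citation.
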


This is Proposition 27 of \cite{R1}. The proof is based on a detailed analysis
of $DA(X)$ and its multipliers for three point sets $X.$

The next result is from \cite{R1} where it is stated with an oversight. We
will say a three dimensional RKHS is non-degenerate if no entry of its Gram
matrix is zero, equivalently no two kernel functions are orthogonal. The
property is preserved under rescaling and under passage to regular subspaces.

\begin{proposition}
\label{4 implies 6}Suppose $H$ is a non-degenerate three dimensional RKHS
which is $r-$orthogonal, then $H\in\mathcal{M}$.
\end{proposition}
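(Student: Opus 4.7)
The plan is to reduce Proposition \ref{4 implies 6} to Proposition \ref{3 implies 6} by computing the value of the extremal multiplier in an $r$-orthogonal three-dimensional RKHS. Rescale $H$ so that its Gram matrix $\mathbf{K}=(k_{ij})_{i,j\in\{a,b,c\}}$ is orthogonal, and arrange via Proposition \ref{exact conjugation} that the conjugation $J$ satisfies $Jk_i=k_i^{\#}$ for each $i$. By Proposition \ref{extreme}, the norm-one multiplier $s\in M(H)$ vanishing at $k_b,k_c$ and maximizing $\operatorname{Re}s(k_a)$ satisfies $sk_a/\|k_a\|=h$, where $h$ is the norm-one vector of $H\ominus\operatorname{span}\{k_b,k_c\}$ maximizing $\operatorname{Re}\langle h,k_a\rangle$. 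Since $\dim H=3$, this complement is one-dimensional and spanned by $k_a^{\#}$, so $h=k_a^{\#}/\|k_a^{\#}\|$. Isometry of $J$ and $Jk_a=k_a^{\#}$ give $\|k_a^{\#}\|=\|k_a\|$, whence
\[ s(k_a)=\frac{\langle h,k_a\rangle}{\|k_a\|}=\frac{1}{\|k_a\|\,\|k_a^{\#}\|}=\frac{1}{k_{aa}}. \]

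The identification $1/k_{aa}=\delta(k_a,k_b)\,\delta(k_a,k_c)$ is a short consequence of orthogonality. Being Hermitian, positive definite, and orthogonal forces $\det\mathbf{K}=1$ and $\mathbf{K}^{-1}=\overline{\mathbf{K}}$; equating the diagonal entries of these two matrices using the cofactor formula for $\mathbf{K}^{-1}$ yields the cyclic identities
\[ k_{aa}=k_{bb}k_{cc}-|k_{bc}|^2, \quad k_{bb}=k_{aa}k_{cc}-|k_{ac}|^2, \quad k_{cc}=k_{aa}k_{bb}-|k_{ab}|^2. \]
Substituting into (\ref{distance}) gives $\delta(k_a,k_b)^2=k_{cc}/(k_{aa}k_{bb})$ and $\delta(k_a,k_c)^2=k_{bb}/(k_{aa}k_{cc})$; their product is $1/k_{aa}^2$. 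Combined with the previous paragraph, this verifies hypothesis (\ref{goal}) of Proposition \ref{3 implies 6}, yielding $H\in\mathcal{M}$.

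The main obstacle is the formal gap that both Proposition \ref{extreme} and Proposition \ref{3 implies 6} presuppose $H\in\mathcal{F}$, whereas the hypothesis here is only that $H$ is a non-degenerate three-dimensional $r$-orthogonal RKHS. This is precisely where non-degeneracy enters: the distances $\delta(k_i,k_j)=\sqrt{k_{ll}/(k_{ii}k_{jj})}$ just computed lie strictly in $(0,1)$, and together with the orthogonality constraints on the phases of $k_{ij}$ are realized by three distinct points of the complex geodesic $\mathbb{B}^1$. This exhibits $\mathbf{K}$ as a rescaling of the Gram matrix of a three-point $DA_1$ subspace, so $H\in\mathcal{F}$, legitimizing the appeal to the propositions above. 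If some $|k_{ij}|$ vanished, the corresponding $\delta$ would equal $1$, precluding any realization by three distinct points of $\mathbb{B}^1$ and causing the reduction to fail --- this is why the non-degeneracy hypothesis, omitted in the original statement in \cite{R1}, is required.
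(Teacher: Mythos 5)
Your first two paragraphs are correct and give a clean reduction: if one may invoke Proposition \ref{extreme}, then $s(k_a)=(\left\Vert k_a\right\Vert \left\Vert k_a^{\#}\right\Vert)^{-1}=1/k_{aa}$, and the cyclic identities $k_{aa}=k_{bb}k_{cc}-|k_{bc}|^2$ (etc.) coming from $\mathbf{K}^{-1}=\overline{\mathbf{K}}$, $\det\mathbf{K}=1$ do give $\delta(k_a,k_b)\delta(k_a,k_c)=1/k_{aa}$, so (\ref{goal}) holds and Proposition \ref{3 implies 6} applies. This is a genuinely different route from the paper, which (following Theorem 28 of \cite{R1}) works directly with the solvability of the system of equations for the rescaling parameters and reads off from the vanishing of a determinant an identity on the Gram matrix entries equivalent to $H\sim DA(X)$ with $X$ in a complex geodesic.

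However, your argument has a genuine gap exactly at the point you flag. Both Proposition \ref{extreme} and Proposition \ref{3 implies 6} require $H\in\mathcal{F}$, and the paper explicitly emphasizes that Proposition \ref{4 implies 6} does \emph{not} carry that hypothesis. Your patch --- that the distances $\delta(k_i,k_j)=\sqrt{k_{ll}/(k_{ii}k_{jj})}\in(0,1)$ ``together with the orthogonality constraints on the phases of $k_{ij}$ are realized by three distinct points of the complex geodesic $\mathbb{B}^1$'' --- is circular: realizing $\mathbf{K}$ as a rescaling of the Gram matrix of a three-point $DA_1(X)$ \emph{is} the conclusion $H\in\mathcal{M}$, and you give no proof of realizability. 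Three numbers in $(0,1)$ need not be the pairwise pseudohyperbolic distances of three points of $\mathbb{B}^1$ (they must satisfy the hyperbolic triangle inequalities), and even when they are, a $3\times 3$ Gram matrix is determined up to rescaling not only by the three moduli $|k_{ij}|^2/(k_{ii}k_{jj})$ but also by the phase invariant $\arg(k_{ab}k_{bc}k_{ca})$, which must be checked against that of a disk configuration. Neither verification appears. As written, your argument proves the weaker statement in which $H\in\mathcal{F}$ is added to the hypotheses --- which, incidentally, would suffice for the application inside the proof of Theorem \ref{A}, where the three-dimensional spaces arise as regular subspaces of a space already in $\mathcal{F}$ --- but it does not prove Proposition \ref{4 implies 6} as stated. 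To close the gap you would need an independent argument (e.g.\ via the determinant computation of \cite{R1}, or by verifying the complete Pick criterion for a non-degenerate orthogonal $3\times 3$ Gram matrix) that does not presuppose the Pick property.
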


If $H\in\mathcal{F}$ then $H$ is automatically non-degenerate. However the
proposition does not have the assumption that $H\in\mathcal{F}$.

This is Theorem 28 of \cite{R1} where the requirement of non-degeneracy was
omitted. The shape of the proof is that finding rescaling parameters that will
transform the Gram matrix of $H$ into an orthogonal matrix involves solving a
system of equations. For those equations to have a solution a determinant
involving functions of the Gram matrix entries must vanish. That vanishing
gives an equation for the Gram matrix entries which is equivalent to knowing
$H\sim DA(X)$ for $X$ in a single complex geodesic. (This is the place where
the proof in \cite{R1} fails. Without the non-degeneracy assumption the
equation involving the Gram matrix entries may trivialize.)

Suppose now that (3) holds and we want to establish (6). After rescaling we
may suppose $H=DA(X_{H})$ for some $X_{H}\subset\mathbb{B}^{m}.$ Let $J$ be
the regular subspace of $H$ with kernel functions $\left\{  k_{a},k_{b}%
,k_{c}\right\}  .$ If we have (\ref{goal}) then by Proposition
\ref{3 implies 6} $J\in\mathcal{M}$ and thus $J\sim DA(X_{J})$ for some
$X_{J}\subset\mathbb{B}^{1}$; in particular $X_{J}$ is in a single complex
geodesic. As discussed in Section \ref{subspaces} this implies that the points
of $X$ corresponding to those three kernel functions also lie in a single
geodesic. The choice of which three kernel functions we considered was
arbitrary and hence we would have (2) for $H.$ We have already seen that (2)
implies (1) which implies (6).

Thus we are done if we can show that for any choice of three kernel functions
(\ref{goal}) holds. From (\ref{gleason}) we know that given any kernel
functions $k_{\alpha},k_{\beta}$ in an $H\in\mathcal{F}$ there will be a
multiplier $m_{\alpha\beta}\in M(H)$ of norm one which satisfies
\begin{equation}
m_{\alpha\beta}(k_{\alpha})=0,\text{ \ \ }m_{\alpha\beta}(k_{\beta}%
)=\delta(k_{\alpha},k_{\beta}). \label{gleason extremal}%
\end{equation}
We now proceed by contradiction. Suppose we have found three kernel functions,
$\left\{  k_{i}\right\}  _{i=1,2,3},$ for which (\ref{goal}) fails and denote
their span by $J.$ Using the notation of (\ref{gleason extremal}) form the
multiplier $r=m_{12}m_{13}\in M(J).$ This multiplier is a candidate for the
extremal problem defining the multiplier $s$ in (\ref{goal}) and by evaluating
if we see that the left hand side in (\ref{goal}) will never be smaller than
the right. Hence by our construction of $J$ (\ref{goal}) fails because the
left hand side is larger. Thus we have a multiplier $t\in M(J)$ of norm one
with
\begin{equation}
t(k_{1})>\delta(k_{1},k_{2})\delta(k_{1},k_{3}). \label{inequality}%
\end{equation}
As noted in Section \ref{subspaces} the multipliers $m_{\alpha\beta}$ and $t$
all have norm preserving extensions to elements in $M(H).$ We will regard
those extensions as having been made and use the same notation for the
extended multipliers. Staying with the notation in (\ref{gleason extremal})
consider the multiplier in $q\in M(H)$ given by%
\[
q=t\prod_{j=4}^{n}m_{1j}.
\]
By the Banach algebra property of $M(H)$ $q$ has norm at most one and by
construction it vanishes at $k_{2},...,k_{n};$ hence $q$ is a competitor in
the extremal problem defining $m_{1}.$ Furthermore, comparing the definition
of $q$, (\ref{gleason extremal}), and (\ref{inequality}) we see that
$q(k_{1})$ is larger than the right hand side of (\ref{multiplier = product}).
This inequality contradicts the extremal value suggested by
(\ref{multiplier = product}) and hence contradicts the assumption that (3)
holds. This completes the proof that (3) implies (6)

We now show (4) implies (6). We are given $H\in\mathcal{F}$ that is $n$
dimensional and is $r-$orthogonal. Let $\left\{  k_{i}\right\}  _{i=1}^{n}$ be
an arbitrary numbering of the kernel functions of $H$ and let $H_{-}%
\in\mathcal{F}$ be the $n-1$ dimensional regular subspace of $H$ spanned by
the kernel functions $\left\{  k_{i}\right\}  _{i=1}^{n-1}.$ We will show that
$H_{-}$ is $r-$orthogonal. Repeating this shows that for any three kernel
functions $\left\{  k_{\alpha},k_{\beta},k_{\gamma}\right\}  $ their span,
$H_{\alpha\beta\gamma},$ is $r-$orthogonal. By Proposition \ref{4 implies 6}
this insures $H_{\alpha\beta\gamma}\in M.$ Hence after rescaling
$H_{\alpha\beta\gamma}=DA(X_{\alpha\beta\gamma})$ for a three point set
$X_{\alpha\beta\gamma}$ in the unit disk. As we noted in Section
\ref{subspaces} this insures that when $H$ is rescaled as $DA(X)$ for some $X$
in some $\mathbb{B}^{m}$ then the points of $X$ corresponding to the kernel
functions $\left\{  k_{\alpha},k_{\beta},k_{\gamma}\right\}  $ will lie in a
single complex geodesic in $\mathbb{B}^{m}.$ The numbering of the kernel
functions of $H$ was arbitrary and hence this establishes (2) for $H$ from
which (1) follows and then (6).

To prove the reduction we start with $n-$dimensional $H\in\mathcal{F}$ with
kernel functions $\left\{  k_{j}\right\}  _{j=1}^{n}$ which, by rescaling, we
can assume has an orthogonal Gram matrix $\mathbf{K}=(k_{ij})_{i,j=1}^{n}$. We
denote the dual basis, the kernel functions of $H^{\#}$, by $\{k_{i}%
^{\#}\}_{i=1}^{n}$ and write its Gram matrix as $\mathbf{K}^{\#}=(k_{ij}%
^{\#})_{ij=1}^{n}.$ Because $H$ is orthogonal we have $\mathbf{K}%
^{-1}=\mathbf{K}^{\#}=\mathbf{K}^{t}$ and hence the condition that $H,$ and
thus $\mathbf{K}$, is orthogonal is expressed by the equations, for $1\leq
i,j\leq n.$%
\begin{equation}
\sum_{s=1}^{n}k_{is}k_{sj}^{\#}=\delta_{ij}. \label{use}%
\end{equation}
Let $H_{-}$ be the regular subspace of $H$ spanned by the kernel functions
$\left\{  k_{i}\right\}  _{i=1}^{n-1}.$ The inner product of those $k_{j}$ is
the same whether they are regarded as vectors in $H$ or $H_{-}$ and hence the
Gram matrix of $H_{-}$ is the upper left $\left(  n-1\right)  \times\left(
n-1\right)  $ block of the Gram matrix of $H:\mathbf{K}_{-}=\left(
k_{ij}\right)  _{ij=1}^{n-1}.$ Let $\{k_{-i}^{\#}\}$ be the dual basis of the
basis $\left\{  k_{i}\right\}  _{i=1}^{n-1}$ and let $\mathbf{K}_{-}%
^{\#}=\left(  k_{ij}^{\#}\right)  _{ij=1}^{n-1}$ be its Gram matrix. For ease
of reading we set $g_{j}=k_{-j}^{\#}$ and $g_{ij}=k_{-ij}^{\#}.$

The vectors $\{k_{j}^{\#}\}_{j=1}^{n-1}$ are nearly but not quite the
$\left\{  g_{j}\right\}  _{j=1}^{n-1}.$ They give the correct inner products;
for $1\leq i,j\leq n-1$%
\begin{equation}
\left\langle k_{i},k_{j}^{\#}\right\rangle =\delta_{ij}, \label{want}%
\end{equation}
but they are not in $H.$ To obtain vectors in $H$ which satisfy the analog of
(\ref{want}) we apply $P,$ the orthogonal projection from $H$ to $H_{-}.$ That gives%

\[
\left\langle k_{i},P(k_{j}^{\#})\right\rangle =\delta_{ij};
\]
and hence the $P(k_{j}^{\#})$ are our desired $g_{j},$ $j=1,...,n-1.$

We know $P(k_{j}^{\#})=k_{j}^{\#}-Q(k_{j}^{\#})$ where $Q$ is the projection
complimentary to $P$ and we use that to compute $g_{j}.$ $Q$ is the projection
onto the orthocomplement of $H_{-}$ in $H$ and by the definition of the dual
basis that is the subspace of $H$ spanned by $k_{n}^{\#}$. Hence for any $b$
\[
P(b)=b-\left\langle b,\frac{k_{n}^{\#}}{\left\Vert k_{n}^{\#}\right\Vert
}\right\rangle \frac{k_{n}^{\#}}{\left\Vert k_{n}^{\#}\right\Vert }.
\]
In particular, for $1\leq j\leq n-1$%
\begin{align*}
g_{j}  &  =k_{j}^{\#}-\left\langle k_{j}^{\#},\frac{k_{n}^{\#}}{\left\Vert
k_{n}^{\#}\right\Vert }\right\rangle \frac{k_{n}^{\#}}{\left\Vert k_{n}%
^{\#}\right\Vert }\\
&  =k_{j}^{\#}-\frac{k_{jn}^{\#}}{k_{nn}^{\#}}k_{n}^{\#}.
\end{align*}
We want to evaluate $\mathbf{K}_{-}^{\#}=\left(  g_{ij}\right)  ;$%

\begin{align*}
g_{ij}  &  =\left\langle g_{i},g_{j}\right\rangle =\left\langle k_{i}%
^{\#}-\frac{k_{in}^{\#}}{k_{nn}^{\#}}k_{n}^{\#},\text{ }k_{j}^{\#}%
-\frac{k_{jn}^{\#}}{k_{nn}^{\#}}k_{n}^{\#}\right\rangle \\
&  =\left\langle k_{i}^{\#},\text{ }k_{j}^{\#}\right\rangle -\left\langle
\frac{k_{in}^{\#}}{k_{nn}^{\#}}k_{n}^{\#},k_{j}^{\#}\right\rangle +0\\
&  =k_{ij}^{\#}-\frac{k_{in}^{\#}k_{nj}^{\#}}{k_{nn}^{\#}}%
\end{align*}
To show that $\mathbf{K}_{-}$ is an orthogonal matrix we will show
\[
\sum_{s=1}^{n-1}k_{-is}k_{-sj}^{\#}=\sum_{s=1}^{n-1}k_{is}g_{sj}=\delta_{ij}.
\]
We compute%
\begin{align*}
\sum_{s=1}^{n-1}k_{is}g_{sj}  &  =\sum_{s=1}^{n-1}k_{is}\left(  k_{sj}%
^{\#}-\frac{k_{sn}^{\#}k_{nj}^{\#}}{k_{nn}^{\#}}\right) \\
&  =\sum_{s=1}^{n-1}k_{is}k_{sj}^{\#}-\sum_{s=1}^{n-1}\frac{k_{is}k_{sn}%
^{\#}k_{nj}^{\#}}{k_{nn}^{\#}}\\
&  =\left(  \delta_{ij}-k_{in}k_{nj}^{\#}\right)  +\frac{\left(  -\delta
_{in}+k_{in}k_{nn}^{\#}\right)  k_{nj}^{\#}}{k_{nn}^{\#}}\\
&  =\delta_{ij}.
\end{align*}
In the passage from the second line to the third we used (\ref{use}) for the
index pair $\left(  i,j\right)  $ and for the index pair $\left(  i,n\right)
.$ The passage to the final line used the fact that $i<n$ and hence
$\delta_{in}=0.$

\section{Comments and Variations}

\subsection{Reformulations of $m_{1}(x_{1})=\prod_{j=2}^{n}\delta(x_{1}%
,x_{j})$}

Suppose $m_{1}\in M(H)$ is the multiplier described in (3) of Theorem \ref{A}.
Taking note of Proposition \ref{extreme} we can write (\ref{extremal}) for
this multiplier and obtain%
\[
m_{1}\frac{k_{1}}{\left\Vert k_{1}\right\Vert }=h.
\]
Here $h$ is the function of norm one which vanishes at $x_{2},...,x_{n}$ and
maximizes $\operatorname{Re}h(x_{1}).$ The space of competitors for $h$ is one
dimensional and hence $h=k_{1}^{\#}/\left\Vert k_{1}^{\#}\right\Vert .$ Using
this and taking the inner product of both sides of the previous equation with
$k_{1}$ we find%
\[
m_{1}(k_{1})\frac{k_{11}}{\left\Vert k_{1}\right\Vert }=\frac{\left\langle
k_{1}^{\#},k_{1}\right\rangle }{\left\Vert k_{1}^{\#}\right\Vert }%
\]
which simplifies as
\[
m_{1}(k_{1})=(\left\Vert k_{1}\right\Vert \left\Vert k_{1}^{\#}\right\Vert
)^{-1}.
\]

We obtain another expression for $m_{1}(k_{1})$ if we consider the idempotent
multiplier in $M(H)$ which takes the value $1$ at $k_{1}$ and is zero
elsewhere. Denote that multiplier by $\operatorname{Idem}_{1}.$ It is in the
one dimensional space spanned by $m_{1}$ and hence $\operatorname{Idem}%
_{1}=m_{1}/m_{1}(k_{1}).$ Thus%
\[
m_{1}(k_{1})=(\left\Vert \operatorname{Idem}_{1}\right\Vert )^{-1}.
\]

Either of these evaluations of $m_{1}(k_{1})$ could be used on the left hand
side of (\ref{multiplier = product}). We had already noted that the $\delta$'s
on the right hand side can be evaluated using the Hilbert space structure $H$
or the using the structure of $M(H).$ Hence we can write versions of
(\ref{multiplier = product}) based entirely on data from $H$ or entirely on
using data from $M(H).$

\subsection{When is $H^{\#}\in\mathcal{F}?$\label{C}}

We noted that Corollary \ref{is also} leads to the fact that $H\in\mathcal{M}$
is a necessary and sufficient condition for $H^{\#}\in\mathcal{M}$. On the
other hand it is not clear what conditions on a RKHS $H,$ even one in
$\mathcal{F}$, will insure $H^{\#}\in\mathcal{F}$. Here we indicate that the
condition $H\in\mathcal{M}$, which is sufficient because $H^{\#}%
\in\mathcal{M\subset F}$, is not necessary, and the condition $H\in
\mathcal{F}$ is not sufficient.

Suppose $X\subset\mathbb{B}^{1}\subset\mathbb{B}^{2}$ and $H=DA(X)$.
Arbitrarily slight modification of $X$ can produce $\tilde{X}\subset
\mathbb{B}^{2}$ which are not contained in a single complex geodesic. For
those $\tilde{X}$ by Theorem \ref{A} $\tilde{H}=DA(\tilde{X})\in$
$\mathcal{F\smallsetminus M}$. If $\tilde{X}$ is sufficiently close to $X$
then the Gram matrix of $\tilde{H}$ is arbitrarily close to the Gram matrix of
$H.$ Furthermore, passing to inverse matrices, the Gram matrix of $\tilde
{H}^{\#}$ is arbitrarily close to the Gram matrix of $H^{\#}.$ We know
$H^{\#}\in\mathcal{M\subset F}$ and we know that being in $\mathcal{F}$ is an
open condition on the Gram matrix of a HSRK \cite{AM} (this is in contrast to
the condition for being in $\mathcal{M}$) hence, if our perturbation was
sufficiently small then $H^{\#}\in\mathcal{F}$.

On the other hand $H\in\mathcal{F}$ is not itself sufficient to insure
$H^{\#}\in\mathcal{F}$. Consider, for $0<a,b<1,$ $X=\left\{  \left(
0,0\right)  ,\left(  a,0\right)  ,\left(  0,b\right)  \right\}  =\mathbb{B}%
^{2},$ in some sense the extreme opposite of $X$ being in a single geodesic.
The Gram matrix of $H=DA(X)$ has the form
\[
G=%
\begin{pmatrix}
1 & 1 & 1\\
1 & \ast & 1\\
1 & \ast & \ast
\end{pmatrix}
\]
The Gram matrix of $H^{\#}$ is $G^{-1}$ and by explicit computation the
$(2,3)$ entry of $G^{-1}$ is $0$ which is impossible for a RKHS in
$\mathcal{F}$.

\subsection{The Use of Hardy Space Theory and the Pick Condition}

In our proof of Theorem \ref{A} we used the function theory of the Hardy space
to study $K_{B}\subset H^{2},$ in particular to prove that (6) implies (3) and
that (6) implies (4). It would be interesting to have a proof of either of
these implication, or of the equivalence of (3) and (4) without involving (6),
that was inside the the theory of RKHS and did not use function theory. In
this context we note the work of Cole, Lewis, and Wermer in \cite{CLW}. They
study conditions on $M(H),$ the multiplier algebra of a RKHS $H,$ which
suffice to insure that $H\in\mathcal{M}$. They have two approaches, one using
operator theory and a second which uses their hypotheses on the multiplier
algebras to, in effect, reconstruct and thus reintroduce parts of Hardy space
function theory.

Many of the steps in the proof of Theorem \ref{A} did not require the Pick
property and hence can be used for any finite dimensional RKHS. However there
were places where we did use the Pick property and do not know the extent to
which it could be avoided. In particular we used Proposition \ref{extreme} to
connect multiplier algebra statements to Hilbert space statements. This fact,
which is a characteristic property of spaces in $\mathcal{F}$, was used in
showing $\delta$ satisfies (\ref{gleason}) and in our analysis of condition
(3) in Theorem \ref{A}.

Also, the hypothesis $H\in\mathcal{F}$ was used in passing from condition (2)
to condition (1). For instance it is not clear that given a general four
dimensional RKHS $H,$ and knowing that all of its regular three dimensional
subspaces are in $\mathcal{M}$, is enough to insure $H\in\mathcal{F}$. If we
could show that, that $H\in\mathcal{F}$, then by the implication (2) implies
(1) in Theorem \ref{A} we would also know $H\in\mathcal{M}$. In this context
it is interesting to note that there are examples due to Quiggen in which all
the regular subspaces of a four dimensional $H$ are in $\mathcal{F}$ but $H$
is not in $\mathcal{F}$. Those are discussed in \cite{R2}.

To see this issue in context, suppose we wanted to show that an orthogonal
$H$, not necessarily in $\mathcal{F}$, satisfied Statement (1), and hence
Statement (6), in Theorem \ref{A}. The reduction to the case of three
dimensional regular subspaces is a linear algebra argument and so continues to
hold. If we assume $H$ is non-degenerate we can then use Proposition
\ref{4 implies 6} to obtain a version of Statement (2). However without
knowing $H\in\mathcal{F}$ it is not clear how to proceed from Statement (2) to
Statement (1).

\subsection{Replacing RKHS by Point Sets in Projective Space}

It is possible to recast much of the previous discussion in the language of
point sets in complex projective space. The viewpoint is intriguing but it is
not clear where it leads. We will be brief and informal.

Suppose we start with the finite dimensional Hilbert space $\mathbb{C}^{n}$
and regard it as a RKHS, $H,$ by selecting a set of basis vectors $\left\{
k_{i}\right\}  $ and declaring those vectors to be reproducing kernels. In
fact any finite dimensional RKHS is a rescaling of such an $H.$

Let $\mathbb{PC}^{n-1}$ be the complex projective space of lines in
$\mathbb{C}^{n};$ for each nonzero vector $v\in\mathbb{C}^{n}$ we denote the
line containing $v$ by $\left[  v\right]  ;$ thus $\left[  v\right]
\in\mathbb{PC}^{n-1}.$ Hence we can associate to $H$ the set $\left[
H\right]  =\left\{  \left[  k_{i}\right]  \right\}  _{i=1}^{n}\subset
\mathbb{PC}^{n-1}.$ Note that if we rescale $H$ to $\tilde{H}$ by selecting
scalars $\left\{  \lambda_{i}\right\}  $ declaring the $\left\{  \lambda
_{i}k_{i}\right\}  $ to be the kernels for $\tilde{H},$ then $[\tilde
{H}]=\left[  H\right]  .$ The rescaled space $\tilde{H}$, and in some sense
$\tilde{H}$ is the generic rescaling of $H$, produces the same set in
$\mathbb{PC}^{n-1}.$ From our point of view, our interest is in properties
invariant under rescaling, this is an attractive feature.

Next note that there set $\left[  H^{\#}\right]  $ associated to the dual RKHS
$H^{\#}$ can also be described in the language of $\mathbb{PC}^{n-1}.$ To
$n-1$ generic points in $\mathbb{PC}^{n-1}$ there correspond $n-1$ linearly
independent lines in $\mathbb{C}^{n}$. There is a unique line in
$\mathbb{C}^{n}$ orthogonal to those lines and that orthogonal line
corresponds to a point in $\mathbb{PC}^{n-1}$ which is "orthogonal" to each of
the $n-1$ points in the starting set. This process applied to each of the
$n-1$ point subsets of $\left[  H\right]  $ gives a set $\left[  H\right]
^{\#}$. Tracking the definitions we see that $\left[  H\right]  ^{\#}=\left[
H^{\#}\right]  .$

If $H$ is orthogonal then $\left\{  k_{i}\right\}  $ and $\{k_{i}^{\#}\}$ are
related through a period two conjugate linear isometry of the Hilbert space.
This isometry descends to an isometry of $\mathbb{PC}^{n-1}$ which
interchanges $\left[  H\right]  $ and $\left[  H^{\#}\right]  .$ On the other
hand if there is such an isometry connecting $\left[  H\right]  $ and $\left[
H^{\#}\right]  $ then one can show by analysis in $\mathbb{PC}^{n-1}$ that the
isometry extends to a global isometry of $\mathbb{PC}^{n-1}$ which, by
Wigner's theorem, must come from a unitary or antiunitary map of the original
$\mathbb{C}^{n}.$

In sum, many of the ideas we have considered can be comfortably reformulated
as statements about certain types of symmetric point sets in projective space.

\end{document}